\newcommand{\eeq}{\end{equation}}
\newtheorem{thm}{Theorem}[section]
\newtheorem{defn}[thm]{Definition}
\newtheorem{conj}[thm]{Conjecture}
\newtheorem{prop}[thm]{Proposition}
\newtheorem{lem}[thm]{Lemma}
\newtheorem{cor}[thm]{Corollary}
\title[The Dixmier-Moeglin equivalence for cocommutative Hopf algebras]{The Dixmier-Moeglin equivalence for cocommutative Hopf algebras of finite Gelfand-Kirillov dimension}
\subjclass[2010]{Primary 16W30, Secondary 16T05, 16S30, 16P90}
\keywords{Primitive ideals, Nullstellensatz, Gelfand-Kirillov dimension, cocommutative Hopf algebras, Dixmier-Moeglin equivalence}
\author{Jason P. Bell}
\thanks{The research of the first-named author  was supported by NSERC grant 611456.}
\address{ Department of Mathematics, University of Waterloo, 
Waterloo, ON, N2L  3G1, CANADA}
\email{jpbell@uwaterloo.ca}
\author{Wing Hong Leung}
\thanks{The research of the second-named author was supported by the Mathematics Department of the Chinese University of Hong Kong and the Undergraduate Summer Research Fellowship from the office of Academic Links of the Chinese University of Hong Kong}
\address{Department of Mathematics, The Chinese University of Hong Kong, Shatin, Hong Kong}
\email{s1155016221@cuhk.edu.hk}
\begin{document}

\begin{abstract} 
Let $k$ be an algebraically closed field of characteristic zero and let $H$ be a noetherian cocommutative Hopf algebra over $k$.   We show that if $H$ has polynomially bounded growth then $H$ satisfies the Dixmier-Moeglin equivalence.  That is, for every prime ideal $P$ in ${\rm Spec}(H)$ we have the equivalences $$P~{\rm primitive}\iff P~{\rm rational}\iff P ~{\rm locally~closed~in}~{\rm Spec}(H).$$  We observe that examples due to Lorenz show that this does not hold without the hypothesis that $H$ have polynomially bounded growth.  We conjecture, more generally, that the Dixmier-Moeglin equivalence holds for all finitely generated complex noetherian Hopf algebras of polynomially bounded growth.
\end{abstract}


\maketitle

%
%
%
%
\section{Introduction} 
The Dixmier-Moeglin equivalence is a fundamental result in the representation theory of enveloping algebras of finite-dimensional Lie algebras.   Generally speaking, the problem of finding the irreducible representations of an algebra is a very difficult and sometimes intractable problem.  As part of a program to deal with this issue, Dixmier proposed that one should first find the kernels of irreducible representations---these are called primitive ideals of the algebra and they form a distinguished subset of the prime spectrum.  This would at least provide a rough classification of the equivalence classes of the irreducible representations.  This approach has been very fruitful in the study of enveloping algebras and their representations.  The problem of characterizing the primitive ideals of enveloping algebras of finite-dimensional complex Lie algebras was solved completely by Dixmier \cite{Dixmier} and Moeglin \cite{Moeglin}.  In this case, we have the following equivalence.
\begin{thm} Let $\mathcal{L}$ be a finite-dimensional complex Lie algebra and let $P$ be a prime in ${\rm Spec}(U(\mathcal{L}))$.  Then the following are equivalent:
\begin{enumerate}
\item[$(1)$] $P$ is primitive;
\item[$(2)$] $\{P\}$ is locally closed in ${\rm Spec}(R)$;
\item[$(3)$] $P$ is rational.
\end{enumerate}
\end{thm}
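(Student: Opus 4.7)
The plan is to prove each of the three implications $(2)\Rightarrow(1)$, $(1)\Rightarrow(3)$, $(3)\Rightarrow(2)$ separately, following the Dixmier-Moeglin framework. The preparatory ingredient will be the Nullstellensatz for $U(\mathcal{L})$: namely, that $U(\mathcal{L})$ is Jacobson and that every simple $U(\mathcal{L})$-module has scalar endomorphism ring $\mathbb{C}$. This I would verify from the fact that $U(\mathcal{L})$ is a finitely generated noetherian $\mathbb{C}$-algebra via the standard Quillen-type cardinality argument: working over a countable subfield $\overline{\mathbb{Q}} \subset \mathbb{C}$, one shows that $\mathrm{End}_{U(\mathcal{L})}(M)$ must be algebraic over $\mathbb{C}$ on cardinality grounds.

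For the two easy directions, I would argue as follows. First, if $\{P\}$ is locally closed in $\mathrm{Spec}(U(\mathcal{L}))$, then $P$ is a $G$-ideal, meaning that the intersection of primes strictly containing $P$ is strictly larger than $P$. Since $U(\mathcal{L})$ is Jacobson, $P$ is the intersection of the primitive ideals containing it, so some primitive ideal must equal $P$, giving $(2)\Rightarrow(1)$. For $(1)\Rightarrow(3)$, if $P = \mathrm{Ann}_{U(\mathcal{L})}(M)$ for a simple module $M$, then $U(\mathcal{L})/P$ is prime noetherian with Goldie quotient ring $D$, and the center $Z(D)$ acts on $M$ through $\mathrm{End}_{U(\mathcal{L})}(M) = \mathbb{C}$; this forces $Z(D) = \mathbb{C}$, so that $P$ is rational.

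The main obstacle will be $(3)\Rightarrow(2)$. I would attack it by splitting on the structure of $\mathcal{L}$. In the solvable case, the plan is to use Dixmier's bijection between coadjoint orbits $\mathcal{L}^*/\mathrm{Ad}^*(L)$ and primitive ideals of $U(\mathcal{L})$, which translates rationality into a property of the corresponding orbit that matches local closedness in $\mathrm{Spec}(U(\mathcal{L}))$. In the semisimple case, I would invoke Duflo's theorem that every primitive ideal is the annihilator of a simple highest weight module, then combine it with the Harish-Chandra isomorphism and Joseph's analysis of the Zariski topology on $\mathrm{Prim}(U(\mathcal{L}))$ to show directly that rational primes are locally closed.

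Finally, for the general case I would reduce to the solvable and semisimple settings via the Levi decomposition $\mathcal{L} = \mathcal{S} \ltimes \mathrm{rad}(\mathcal{L})$, tracking primitive ideals through an appropriate semidirect-product argument. The core difficulty throughout is that $(3)\Rightarrow(2)$ is not a formal consequence of the Nullstellensatz but is genuinely structural, drawing on the specific representation theory of $\mathcal{L}$; this is where the proof is deepest and where the argument will require the heaviest machinery.
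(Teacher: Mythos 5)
First, a point of orientation: the paper does not prove this theorem. It is the classical Dixmier--Moeglin theorem for enveloping algebras, stated in the introduction purely as motivation and attributed to \cite{Dixmier} and \cite{Moeglin}; the paper's own main theorem takes this result as a known input. So there is no in-paper argument to compare yours against, and your proposal has to be judged against the actual literature proof.

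Your treatment of the two easy implications is essentially correct and standard. The Nullstellensatz for $U(\mathcal{L})$ (the Jacobson property together with Quillen's lemma, or your cardinality argument over $\mathbb{C}$) gives $(2)\Rightarrow(1)$ exactly as you describe, and $(1)\Rightarrow(3)$ works once you make the action of $Z(D)$ on $M$ precise: an element $z$ of the Goldie quotient ring does not literally act on $M$, and one must pass to the nonzero two-sided ideal $I=\{a\in U(\mathcal{L})/P : za\in U(\mathcal{L})/P\}$, use $IM=M$ and faithfulness to get a well-defined action, and only then conclude $Z(D)\hookrightarrow \operatorname{End}_{U(\mathcal{L})}(M)=\mathbb{C}$. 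That is a standard lemma (cf.\ \cite[II.7.16]{BG}) but it is a step, not a tautology. The genuine gap is in $(3)\Rightarrow(2)$, which is the entire content of the theorem. What you have written there is an index of machinery (Dixmier's orbit method, Duflo's theorem, Harish--Chandra, Joseph) rather than an argument, and the step that fails as stated is the gluing: primitivity, rationality and local closedness do not pass through a Levi decomposition $\mathcal{L}=\mathcal{S}\ltimes\mathrm{rad}(\mathcal{L})$ by any routine ``semidirect-product argument,'' since $U(\mathcal{L})$ is not a crossed product of $U(\mathcal{S})$ over $U(\mathrm{rad}(\mathcal{L}))$ in a way that controls prime ideals. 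Moeglin's proof that rational implies primitive is a delicate induction on $\dim\mathcal{L}$ working through the nilradical $\mathfrak{n}$, analyzing $P\cap U(\mathfrak{n})$ and the stabilizer of the corresponding coadjoint orbit, and Dixmier's proof that primitive implies locally closed is a similar dimension induction; neither reduces to ``do the solvable case, do the semisimple case, combine.'' To complete your proof you would have to either reproduce that induction or cite \cite{Dixmier} and \cite{Moeglin} outright --- which is exactly what the paper does.
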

We recall that a set is locally closed in a topological space if it is of the form $X\setminus Y$, for some closed subsets $X$ and $Y$ of the ambient set; in the case that the set we are considering is a singleton $\{x\}$, we will say that $x$ is locally closed rather than writing $\{x\}$ is locally closed.  
A prime $P$ of a noetherian algebra $A$ is \emph{rational} if the centre of the artinian ring of quotients of $A/P$ is an algebraic extension of the base field.  (Some authors insist that the centre in fact be a finite extension of the base field, although the property is most commonly considered when dealing with algebras over an algebraically closed base field, and there is no distinction in this case.)  The centre of the artinian ring of quotients of a prime noetherian algebra $R$ is called the \emph{extended centroid} of $R$, and we denote it by $\mathcal{C}(R)$.  Thus we see that the Dixmier-Moeglin equivalence gives both a topological and a purely algebraic characterization of the annihilators of simple modules. 

In general, any noetherian algebra with the property that $(1)$--$(3)$ are equivalent for all prime ideals of the algebra is said to satisfy the \emph{Dixmier-Moeglin equivalence}.  To be precise, we give the following definition.  

\begin{defn} {\em Let $R$ be a noetherian algebra.  We say that $R$ satisfies the \emph{Dixmier-Moeglin equivalence} if for every $P\in {\rm Spec}(R)$, the following properties are equivalent:
\begin{enumerate}
\item[($A$)] $P$ is primitive;
\item[($B$)] $P$ is rational;
\item[($C$)] $P$ is locally closed in ${\rm Spec}(R)$.
\end{enumerate}
\label{defn: DM}}
\end{defn}

Since the work of Dixmier and Moeglin, the Dixmier-Moeglin equivalence has been shown to hold for many classes of algebras, including many quantized coordinate rings and quantized enveloping algebras, algebras that satisfy a polynomial identity, and many algebras that come from noncommutative projective geometry \cite{GL, BG, Procesi, BRS}.  The Dixmier-Moeglin equivalence is known not to hold in general, however.  Even in the case of noetherian cocommutative Hopf algebras (of which enveloping algebras of finite-dimensional Lie algebras form an important subclass), it is known that the Dixmier-Moeglin equivalence need not hold.  For example, Lorenz \cite{Lor1} constructed a polycyclic-by-finite group whose complex group algebra is primitive but with the property that $(0)$ is not locally closed in the prime spectrum.  While the Dixmier-Moeglin equivalence need not hold in general it is known that one always has the implications $(C)\implies (A)\implies (B)$ for algebras satisfying the Nullstellensatz; in particular, this holds for countably generated complex noetherian algebras (c.f. \cite[II.7.16]{BG} and \cite[Prop. 6]{Lor2}).

It is interesting to note that the Dixmier-Moeglin equivalence has been shown to hold for large classes of Hopf algebras---aside from the original work of Dixmier and Moeglin on enveloping algebras, it has been shown to hold for certain group algebras \cite{Zal}, for many quantum algebras \cite{GL}, and for a large class of Hopf algebras of Gelfand-Kirillov dimension two \cite{GZ}.  As pointed out earlier, Lorenz \cite{Lor1} has shown that the Dixmier-Moeglin equivalence does not hold in general for Hopf algebras, but we point out that his counter-example has \emph{exponential growth}.  

We quickly recall that for a finitely generated algebra $A$ over a field $k$, if one lets $V\subseteq A$ be a finite-dimensional subspace of $A$ that contains $1$ and generates $A$ as an algebra, then we have the containment $$V\subseteq V^2\subseteq V^3 \subseteq \cdots .$$  We can thus create a \emph{growth function}, $g_V:\mathbb{N}\to \mathbb{N}$ defined by $g_V(n)={\rm dim}_k(V^n)$.  We recall that $A$ has \emph{polynomially bounded growth} if there is some $d>0$ such that $g_V(n)={\rm O}(n^d)$.   It is straightforward to show that the notion of polynomial bounded growth is independent of choice of generating subspace $V$ (c.f. Krause and Lenagan \cite[Lemma 1.1]{KL}) and in the case that $A$ has polynomially bounded growth, the infimum of all $d>0$ for which 
$g_V(n)={\rm O}(n^d)$ is called the \emph{Gelfand-Kirillov dimension of $A$}.  If $A$ does not have polynomially bounded growth the Gelfand-Kirillov dimension is infinite.  If there exists a constant $C>1$ such that $g_V(n)>C^n$ for all $n$ sufficiently large then we say that $A$ has \emph{exponential growth}---again, this definition is independent of our choice of generating subspace $V$.  Finally, an algebra having neither polynomially bounded nor exponential growth is said to have \emph{intermediate growth}.

We note that enveloping algebras of finite-dimensional Lie algebras, group algebras of finitely generated nilpotent groups, and the Hopf algebras considered by Goodearl and Letzter \cite{GL} all have polynomially bounded growth.  In light of this work, we make the following conjecture.
\begin{conj}
Let $H$ be a finitely generated complex noetherian Hopf algebra of finite Gelfand-Kirillov dimension.  Then $H$ satisfies the Dixmier-Moeglin equivalence.
\end{conj}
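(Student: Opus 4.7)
The Nullstellensatz for finitely generated complex noetherian algebras, recalled above, yields $(C) \Rightarrow (A) \Rightarrow (B)$ automatically, so the entire content of the conjecture is to show that every rational prime $P \in \mathrm{Spec}(H)$ is locally closed.

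I would first settle the cocommutative case, which is the main theorem of this paper. Over an algebraically closed field of characteristic zero, the Cartier--Kostant--Milnor--Moore theorem writes $H \cong U(\mathfrak{g}) \# kG$, where $\mathfrak{g} = P(H)$ is the Lie algebra of primitive elements and $G = G(H)$ is the group of grouplikes. Finite Gelfand--Kirillov dimension forces $\dim_k \mathfrak{g} < \infty$ and $\mathrm{GKdim}\, kG$ finite; since $kG$ is noetherian, the latter forces $G$ to have polynomial growth, so Gromov's theorem gives that $G$ is virtually nilpotent, hence polycyclic-by-finite. The plan is then to combine the original Dixmier--Moeglin equivalence for $U(\mathfrak{g})$ with Zalesskii-type results for polycyclic-by-finite group algebras via the smash product structure: for a rational $P$ in $H$, pick a minimal prime $Q$ above $P \cap U(\mathfrak{g})$, analyze its $G$-orbit on $\mathrm{Spec}\, U(\mathfrak{g})$, pass to a finite-index stabilizer, and use a Clifford-theoretic descent to inherit local closedness from the enveloping algebra side. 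The Lorenz counterexamples show that the Gromov step is essential: without polynomial growth $G$ can fail to be polycyclic-by-finite and this whole strategy collapses.

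For the full conjecture, the principal obstacle is the absence of a structure theorem in place of Cartier--Kostant--Milnor--Moore. I would try a reduction via the coradical filtration: replace $H$ by the associated graded $\mathrm{gr}\, H$, which in the pointed case is a crossed product of a group algebra and a connected graded Hopf algebra (the connected graded piece brings us back into enveloping-algebra territory), and then transfer the equivalence back to $H$. A complementary route, in the spirit of Goodearl--Letzter, is to find a rational torus action on $H$ producing finitely many $H$-prime strata and verify the equivalence stratum by stratum, with commutative algebra over the fixed ring taking over once the action has been used to collapse nontrivial primes.

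The hardest step, in either route, is the transfer itself. For a rational $P \subset H$, one must show that algebraicity of the extended centroid $\mathcal{C}(H/P)$ is detected by a tractable sub- or quotient Hopf algebra in which the equivalence is already known, and that local closedness pulls back through the reduction. In the cocommutative case this is governed by the concrete action of $G$ on $\mathrm{Spec}\, U(\mathfrak{g})$, but for general Hopf algebras the action of $H$ on its own spectrum is far less rigid, and I expect a Hopf-equivariant analogue of Moeglin's heart-analysis, or a new invariant-theoretic argument keyed directly to polynomial growth, will be required.
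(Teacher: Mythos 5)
The statement you are proving is labelled a \emph{conjecture} in the paper, and the authors do not prove it: they only establish the cocommutative special case (Theorem \ref{thm: main}). Your proposal, read honestly, is also not a proof but a research program, and for the general (non-cocommutative) case it has genuine gaps at every essential step. Concretely: (a) a noetherian Hopf algebra of finite GK dimension over $\mathbb{C}$ need not be pointed, so the coradical filtration need not even be an algebra filtration and ${\rm gr}\, H$ need not have the crossed-product shape you describe; (b) even when ${\rm gr}\, H$ is available, there is no known transfer principle carrying rationality or local closedness of primes between $H$ and ${\rm gr}\, H$ (these are not graded-deformation-stable properties in any established generality); (c) the Goodearl--Letzter stratification requires a rational torus action with finitely many $H$-prime strata, which a general $H$ has no reason to admit. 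You acknowledge this yourself when you write that a ``new invariant-theoretic argument \ldots will be required''---that sentence is the conjecture restated, not a proof of it.

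Your sketch of the cocommutative case is in the right spirit and tracks the paper's actual proof of Theorem \ref{thm: main} up to a point: Kostant's decomposition $H\cong U(\mathcal{L})\#k[G]$, finite-dimensionality of $\mathcal{L}$, Gromov's theorem for $G$, and the Nullstellensatz reducing everything to $(B)\implies(C)$. But the paper's mechanism for that last implication is not the orbit/finite-index-stabilizer Clifford descent you propose. Instead it uses Letzter's finite-extension theorem to reduce to nilpotent $G$, passes to the Zariski closure $\overline{G}$ of $G$ in ${\rm GL}(V)$ acting rationally on $U(\mathcal{L})$, invokes Lorenz's bijection between rational and $\overline{G}$-rational ideals together with his result that local closedness transfers to the $\overline{G}$-spectrum, and finishes with Proposition \ref{prop: semiprime} (every nonzero ideal of a semiprime image of $B\#k[G]$, $B$ artinian and $G$ nilpotent, meets the centre) to kill the height-one primes meeting $S=R/Q$ trivially. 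A naive orbit analysis on ${\rm Spec}\,U(\mathcal{L})$ runs into the problem that the $G$-orbit of a minimal prime over $P\cap U(\mathcal{L})$ can be infinite, which is exactly why the paper works with algebraic-group closures rather than with the discrete group action.
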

We are able to prove the conjecture in the special case that the Hopf algebra $H$ is cocommutative.  More specifically, we prove the following result.
\begin{thm}
\label{thm: main}
Let $H$ be a cocommutative noetherian Hopf algebra over an algebraically closed field $k$ of characteristic zero.  If $H$ has finite Gelfand-Kirillov dimension then $H$ satisfies the Dixmier-Moeglin equivalence.
\end{thm}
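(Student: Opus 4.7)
My plan is to exploit the structure theory of cocommutative Hopf algebras in characteristic zero. By the Cartier--Gabriel--Kostant--Milnor--Moore theorem, since $k$ is algebraically closed of characteristic zero, $H$ decomposes as a smash product $H \cong U(\mathfrak{g}) \# k[G]$, where $\mathfrak{g}$ is the Lie algebra of primitive elements and $G$ is the group of grouplike elements, with $G$ acting on $\mathfrak{g}$ by Lie automorphisms. The finite Gelfand--Kirillov dimension hypothesis forces $\dim_k \mathfrak{g} < \infty$ and forces $G$ to be a finitely generated group of polynomial growth. By Gromov's theorem, $G$ is then virtually nilpotent; combined with noetherianity of $H$, there is a torsion-free nilpotent normal subgroup $N\le G$ of finite index, and $H' := U(\mathfrak{g}) \# k[N]$ is a finite-index subalgebra of $H$.

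First I would reduce the Dixmier--Moeglin equivalence for $H$ to that for $H'$ by invoking standard incomparability, lying-over, and going-up results for the finite extension $H'\subseteq H$ (in the spirit of Montgomery--Passman for crossed products by finite groups), which transport each of the three properties (A)--(C) between $P$ and the finite set of primes of $H'$ lying over it. The Nullstellensatz, which is available for our finitely generated complex noetherian algebras, supplies $(C)\Rightarrow (A)\Rightarrow (B)$ for free, so the real content is $(B)\Rightarrow (C)$ for primes of $H'$. I would then induct on the invariant $\dim_k\mathfrak{g} + h(N)$, where $h(N)$ is the Hirsch length of $N$. In the inductive step, given a rational prime $P$ of $H'$, I would pick either a nonzero central element of $\mathfrak{g}$ or a generator of a central subgroup of $N$ to produce a nonzero normal element $z\in H'$, and split into two cases: for primes containing $z$, pass to a quotient with strictly smaller invariant; for primes not containing $z$, localize at $z$ and invoke the induction hypothesis. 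The base cases are Dixmier's original theorem for $U(\mathfrak{g})$ and Zalesskii's theorem for group algebras of finitely generated nilpotent groups.

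The main obstacle I anticipate is the inductive step at a rational prime $P$ not containing the chosen central element $z$: one must control how the extended centroid $\mathcal{C}(H'/P)$ interacts with the smash product decomposition to guarantee that every prime of $H'$ strictly containing $P$ must eventually contain $z$, so that locally closedness of $(P)$ in the $z$-localization pulls back to locally closedness in $\mathrm{Spec}(H'/P)$ itself. This requires that rationality of $P$, together with the $G$-action on $\mathfrak{g}$ and on the Lie-algebra primes, impose enough finiteness on the set of primes "near" $P$. This is precisely where the finite Gelfand--Kirillov dimension hypothesis is essential: Lorenz's counterexamples, in which $G$ has exponential growth, show that the analogous normalizer and centralizer control fails in general, rational primes cease to be locally closed, and the inductive strategy collapses.
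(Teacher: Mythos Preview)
Your structural reductions---the Cartier--Gabriel--Kostant decomposition, Gromov's theorem, the finite-extension passage to nilpotent $N$, and the Nullstellensatz giving $(C)\Rightarrow(A)\Rightarrow(B)$---all match the paper. The divergence is entirely in the proof of $(B)\Rightarrow(C)$ for $H'=U(\mathfrak g)\#k[N]$, and here your induction on $\dim\mathfrak g+h(N)$ has a gap more basic than the one you flag: the normal element $z$ you propose typically does not exist. A generator of a central cyclic subgroup of $N$ is a \emph{unit} in $H'$, so no prime contains it and your dichotomy is vacuous; replacing it by $z-1$ does not help, since $(z-1)x=(z\cdot x)z-x\neq x(z-1)$ unless $z$ acts trivially on $\mathfrak g$. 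A nonzero element of $Z(\mathfrak g)$ may fail to exist (take $\mathfrak g=\mathfrak{sl}_2$), and when it does it is not normal in $H'$ unless $N$ fixes it, since $gz=(g\cdot z)g$; so no Ore localization at $z$ is available. Even granting some repair here, the obstacle you yourself identify---controlling the primes in the localized branch---is the entire content of the theorem, and your outline offers no mechanism for it beyond restating the difficulty.

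The paper takes a route that never attempts a joint induction on $\mathfrak g$ and $N$. For a rational prime $P$, set $Q=P\cap U(\mathfrak g)$ and replace $N$ by the Zariski closure $\overline N$ of its image in $\mathrm{GL}(\mathfrak g)$, an affine algebraic group acting rationally on $U(\mathfrak g)$. Lorenz's theorems on algebraic group actions on noncommutative spectra then upgrade the classical Dixmier--Moeglin equivalence for $U(\mathfrak g)$ to the statement that the $\overline N$-prime $Q$ is $\overline N$-rational and hence locally closed in the $\overline N$-spectrum; this controls the height-one primes above $P$ that meet $U(\mathfrak g)/Q$ nontrivially. The remaining height-one primes survive after inverting the regular elements of $U(\mathfrak g)/Q$, landing in a prime image of $B\#k[N]$ with $B$ semisimple artinian. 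A separate proposition, proved by induction along the upper central series of $N$, shows that in any semiprime image of such a ring every nonzero ideal meets the centre; combined with $\mathcal C(H'/P)=k$ this forces that set of primes to be empty. So the finiteness you wanted $z$ to deliver comes instead from algebraic-group rigidity on the enveloping-algebra side and a polycentrality argument on the group-algebra side.
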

For the basic background on Hopf algebras, including relevant terminology along with some of the important theory, we refer the reader to the excellent book of Montgomery \cite{Mon}.   Let $H=(H,m,u,\Delta,\epsilon,S)$ be a Hopf algebra with multiplication $m$, comultiplication $\Delta$, unit $\mu$,  counit $\epsilon$, and antipode $S$.  We recall that $H$ is \emph{cocommutative} if $\tau\circ \Delta = \Delta$ where $\tau: H\otimes H\to H\otimes H$ is the linear map that sends $a\otimes b$ to $b\otimes a$ for all $a,b\in H$.  Two of the main examples of cocommutative Hopf algebras are enveloping algebras of Lie algebras and group algebras.  As noted earlier, the Dixmier-Moeglin equivalence was first shown to hold for enveloping algebras of finite-dimensional Lie algebras.  Zalesski\u\i ~ \cite{Zal} showed that the Dixmier-Moeglin equivalence holds for group algebras of finitely generated nilpotent groups.  Both of these classes of algebras have finite Gelfand-Kirillov dimension.  Theorem \ref{thm: main} can thus be seen as a unification of these two results, although we rely heavily on already established results for enveloping algebras in our proof.  The strategy of the proof is to first show that cocommutative noetherian Hopf algebras satisfy the Nullstellensatz.  As remarked earlier, it then only remains to show that ($C$) implies ($A$), where ($A$) and ($C$) are as in Definition \ref{defn: DM}.  We prove this by studying the ideals in the enveloping algebra generated by the primitive elements of $H$ under the action of the grouplike elements of $H$.

\section{Cocommutative Hopf algebras of polynomially bounded growth}
In this section we review the basic results about cocommutative Hopf algebras and we show that the Nullstellensatz holds for such algebras.
 
We recall that if $k$ is a field and $A$ is a $k$-algebra and $H=(H,m,u,\Delta,\epsilon,S)$ is a Hopf $k$-algebra, then a \emph{Hopf action} of $H$ on $A$ is a linear map $\phi: H\otimes A \to A$ that satisfies
$$\phi(h\otimes (ab)) = \sum \phi(h_{(1)}\otimes a)\cdot \phi(h_{(2)}\otimes b),$$ where, in Sweedler notation,
$\Delta(h) = \sum h_{(1)}\otimes h_{(2)}$. We use the more compact notation $h\cdot a$ to denote $\phi(h\otimes a)$.  In the case that $H$ is the group algebra of a group $G$, $\Delta(g)=g\otimes g$ for $g\in G$ and this definition coincides with the ordinary notion of a group action on an algebra.  Given a $k$-algebra with a Hopf $k$-algebra $H$ acting upon it, we can create a $k$-algebra $A\# H$, which is called the \emph{smash product} of $A$ and $H$.  As a $k$-vector space, $A\#H$ is spanned by elements of the form $a\# h$, where $a\in A$ and $h\in H$.  We then have the relations:
\begin{enumerate}
\item $(a+\lambda b)\# h = (a\# h) + \lambda (b\# h)$ for $a,b\in A$, $\lambda\in k$, and $h\in H$;
\item $a\# (h+\lambda h') = (a\# h) + \lambda (a\# h')$ for $a\in A$, $\lambda\in k$, and $h,h'\in H$;
\item $(a\# h)\cdot (b\# h') =\sum  (a \cdot (h_{(1)}\cdot b))\# h_{(2)}h'$ for $a,b\in A$ and $h,h'\in H$.
\end{enumerate}
In the case that $H$ is a group algebra that acts upon $A$, we will write $ag$ instead of writing $a\#g$ for $a\in A$ and $g\in G$.  Also, we will write $a^g$ for $g\cdot a$, and so we have $ga=a^g g$ in $A\# k[G]$.
We recall a well-known result on the structure of cocommutative Hopf algebras over fields of characteristic zero. 
\begin{prop}
\label{prop: structure}
Let $k$ be an algebraically closed field of characteristic zero and let $H$ be a noetherian cocommutative Hopf algebra of finite GK dimension over $k$.  Then $H=U(\mathcal{L})\# k[G]$, where $\mathcal{L}$ is a finite-dimensional Lie algebra over $k$ and $G$ is a finitely generated nilpotent-by-finite group that acts on $\mathcal{L}$ as Lie algebra automorphisms. 
\end{prop}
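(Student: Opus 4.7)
The plan is to invoke the Cartier-Kostant-Milnor-Moore structure theorem to produce the smash-product decomposition, and then use the hypotheses of finite Gelfand-Kirillov dimension and noetherianness to pin down $\mathcal{L}$ and $G$. Cartier's theorem ensures that every cocommutative Hopf algebra over an algebraically closed field of characteristic zero is pointed, and Kostant's theorem then yields the canonical isomorphism $H \cong U(P(H)) \# k[G(H)]$, in which the grouplike elements $G(H)$ act on the primitive subspace $P(H)$ by conjugation inside $H$. Since this action preserves both linear and bracket structures on $P(H)$, it restricts to an action of $G(H)$ on $\mathcal{L} := P(H)$ by Lie algebra automorphisms. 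What remains is to show that $\mathcal{L}$ is finite-dimensional and that $G := G(H)$ is finitely generated and nilpotent-by-finite.

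For $\mathcal{L}$, I would use the inclusion $U(\mathcal{L}) \hookrightarrow H$: monotonicity of GK dimension under subalgebras gives $\dim_k \mathcal{L} = {\rm GKdim}\, U(\mathcal{L}) \le {\rm GKdim}\, H < \infty$.

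To control $G$, the key is to realise $k[G]$ as a quotient of $H$. Because $G$ acts on $U(\mathcal{L})$ by augmentation-preserving algebra automorphisms, the ideal generated by $U(\mathcal{L})^+$ satisfies $U(\mathcal{L})^+ \cdot H = H \cdot U(\mathcal{L})^+$, and the two-sided quotient is isomorphic to $k[G]$; consequently $k[G]$ inherits both noetherianness and finite GK dimension from $H$. Noetherianness forces the augmentation ideal $\omega \subset k[G]$ to be finitely generated as a left ideal, say $\omega = \sum_{i=1}^n k[G](g_i - 1)$. Setting $G_0 := \langle g_1, \ldots, g_n \rangle$ and decomposing $k[G]$ along left cosets of $G_0$, one checks directly that $k[G] \cdot \omega(k[G_0])$ cannot contain any element of the form $t-1$ for $t$ representing a non-trivial coset of $G_0$ in $G$; since all such $t-1$ lie in $\omega$, this forces $G = G_0$, so $G$ is finitely generated.

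Finally, for a finitely generated group the GK dimension of its group algebra coincides with the polynomial growth degree of the group, and is infinite when the group has super-polynomial growth. Finite GK dimension of $k[G]$ therefore forces $G$ to have polynomial growth, and Gromov's theorem on groups of polynomial growth then yields that $G$ is nilpotent-by-finite. The main obstacle is unambiguously this last step: the structural conclusion about $G$ rests on Gromov's deep theorem, for which no elementary substitute is available.
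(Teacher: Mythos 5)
Your proposal is correct and follows essentially the same route as the paper: the Cartier--Kostant decomposition $H\cong U(\mathcal{L})\#k[G]$, GK-dimension monotonicity to get $\dim_k\mathcal{L}<\infty$, noetherianness of $k[G]$ forcing $G$ to be finitely generated via the augmentation-ideal/coset argument, and Gromov's theorem for nilpotent-by-finiteness. The only cosmetic difference is that you obtain noetherianness of $k[G]$ by realizing it as a quotient of $H$ modulo the ideal generated by $U(\mathcal{L})^+$, whereas the paper deduces it from $H$ being a free right $k[G]$-module via faithful flatness; both are valid.
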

\begin{proof} By a result of Kostant (see Sweedler \cite[\S 13.1]{Swe}), if $H$ is a pointed cocommutative Hopf algebra over a field of characteristic zero, then $H$ is the smash product of the enveloping algebra of a Lie algebra $\mathcal{L}$ with a group algebra of a group $G$ that acts on $\mathcal{L}$ via Lie algebra automorphisms.  We note that since $k$ is algebraically closed and $H$ is cocommutative, $H$ is pointed.  If, in addition, $H$ has finite GK dimension, then $U(\mathcal{L})$ and $k[G]$ must too, as they are subalgebras of $H$; in particular, $\mathcal{L}$ must be finite-dimensional \cite[Theorem 6.8]{KL} and by Gromov's theorem $G$ must have the property that every finitely generated subgroup is nilpotent-by-finite \cite[Theorem 10.1]{KL}.  If $H$ is noetherian then by a standard faithful flatness argument, we see that $k[G]$ is also noetherian since $H$ is a free right $k[G]$-module.  Since $k[G]$ is noetherian, $G$ is finitely generated, since the poset of subgroups of $G$ embeds in the poset of left ideals of $k[G]$, via the map $K\mapsto \sum_{x\in K} k[G](x-1)$.   Hence $G$ is a finitely generated nilpotent-by-finite group.  The result follows.  
\end{proof}
We recall that a $k$-algebra $A$ is said to satisfy the \emph{Nullstellensatz} if each simple left $A$-module $M$ has the property that the ring of $A$-module endomorphisms of $M$ is algebraic over $k$ and if each prime homomorphic image of $A$ has trivial Jacobson radical.  We next show that a large class of smash products satisfy the Nullstellensatz.  It is conjectured that a group algebra is noetherian if and only if the underlying group is polycyclic-by-finite.  The following result gives evidence that if one takes the smash product of an algebra satisfying the Nullstellensatz with a noetherian group algebra then the resulting algebra should again satisfy the Nullstellensatz.  This result is mostly due to McConnell \cite[Theorem 4.5]{Mc} and is well-known.  
\begin{prop} Let $k$ be an algebraically closed field, let $R$ be a noetherian $k$-algebra that satisfies the Nullstellensatz, and let $G$ be a polycyclic-by-finite group that acts on $R$ as $k$-algebra automorphisms.  Then $R\# k[G]$ is noetherian and satisfies the Nullstellensatz.
\label{prop: Nullextend}
\end{prop}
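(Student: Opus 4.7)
The plan is to induct on a subnormal series for $G$, reducing to two base steps that can be handled separately. Since $G$ is polycyclic-by-finite, there is a subnormal series $\{1\} = G_0 \triangleleft G_1 \triangleleft \cdots \triangleleft G_n = G$ in which each successive quotient $G_{i+1}/G_i$ is either finite or infinite cyclic. Setting $R_i := R\# k[G_i]$, I will argue inductively that each $R_i$ is noetherian and satisfies the Nullstellensatz, noting that $R_{i+1}$ is a crossed product of $R_i$ with the cyclic-or-finite group $G_{i+1}/G_i$, carrying trivial cocycle since $k[G]$ itself is untwisted.

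When $G_{i+1}/G_i$ is finite, $R_{i+1}$ is a free left $R_i$-module of finite rank, so noetherianness is immediate. The Jacobson half of the Nullstellensatz passes through this finite free extension by standard going-up for primes together with the fact that the Jacobson radical contracts into the Jacobson radical of $R_i$. The endomorphism half is handled by a Clifford-type restriction argument: for a simple $R_{i+1}$-module $M$, the restriction $M|_{R_i}$ has finite length, so $\operatorname{End}_{R_{i+1}}(M)$ embeds into a finite matrix ring over $\operatorname{End}_{R_i}(N)$ for some $R_i$-composition factor $N$ of $M|_{R_i}$, and the latter is algebraic over $k$ by the inductive hypothesis. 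When $G_{i+1}/G_i$ is infinite cyclic with generator $g$, we have $R_{i+1} = R_i[g, g^{-1}; \sigma]$, a skew Laurent polynomial ring in which $\sigma$ is the automorphism of $R_i$ induced by conjugation by $g$. This case is the content of McConnell's theorem \cite[Theorem 4.5]{Mc}, which propagates both noetherianness and the Nullstellensatz through skew Laurent extensions by a single automorphism.

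The main obstacle is the finite-extension step: although the Jacobson condition is essentially classical for finite free extensions, the endomorphism condition must be verified at the level of simple modules rather than prime ideals, and one must track carefully how simples restrict and how their endomorphism rings relate to those of the $R_i$-composition factors. Once these module-theoretic verifications are in place, the two cases combine into a clean induction on $i$, and the conclusion for $R\# k[G] = R_n$ follows.
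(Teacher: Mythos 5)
Your proposal follows essentially the same route as the paper: induct along a subnormal series with cyclic or finite quotients, handle the skew Laurent steps via McConnell's theorem, and handle the finite steps by restricting a simple module to the smaller ring (the paper cites Formanek--Jategaonkar for the complete reducibility and finite length of the restriction) and embedding the endomorphism ring accordingly. One minor imprecision: since the restriction need not be isotypic, $\operatorname{End}_{R_{i+1}}(M)$ embeds in a finite \emph{product} of matrix rings over the endomorphism rings of the various composition factors rather than a single matrix ring over one $\operatorname{End}_{R_i}(N)$, but this does not affect the conclusion that it is algebraic over $k$.
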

\begin{proof}
Since $G$ is polycyclic-by-finite we have a chain 
$$\{1\}=G_0\subseteq G_1 \subseteq \cdots \subseteq G_m=G$$ with each $G_i$ normal in $G_{i+1}$ and each $G_{i+1}/G_i$ cyclic.  This chain of subgroups gives rise to a chain of subalgebras
$$R=R_0\subseteq R_1\subseteq \cdots \subseteq R_m=R\# k[G]$$ such that, for each $i$, either $R_{i+1}\cong R_i[x,x^{-1};\sigma]$ or $R_{i+1}$ is a finite free normalizing extension of $R_i$.   

Suppose that either $R\# k[G]$ does not satisfy the Nullstellensatz or that it is not noetherian.  Then there is some smallest $i$ for which at least one of these two properties does not hold for $R_i$.  By assumption, $i>0$.  
If $R_i\cong R_{i-1}[x,x^{-1};\sigma]$ then by \cite[Theorem 4.5]{Mc}, $R_i$ satisfies the Nullstellensatz; it is also noetherian by virtue of being a skew Laurent extension of a noetherian ring.  If, on the other hand, $R_i$ is a finite free normalizing extension of $R_{i-1}$ then $R_i$ is noetherian and has the Jacobson property \cite[9.1.3]{MR}.  A result of Formanek and Jategaonkar \cite[Theorem 4]{FJ} gives that if $M$ is a simple $R_i$-module, then, if we regard $M$ as an $R_{i-1}$-module, it is completely reducible of finite length.  It follows that ${\rm End}_{R_{i-1}}(M)$ is a finite direct product of matrix algebras over $k$ by hypothesis on $R_{i-1}$ and $k$.  Consequently, ${\rm End}_{R_{i-1}}(M)$ is algebraic over $k$, and hence so is ${\rm End}_{R_i}(M)$, being a subalgebra of ${\rm End}_{R_{i-1}}(M)$.  Thus in either case we have that $R_i$ is noetherian and satisfies the Nullstellensatz, a contradiction.  The result follows.
\end{proof}
%
%

We note that the hypothesis that the base field be algebraically closed is not necessary to show that $R\# k[G]$ is noetherian in Proposition \ref{prop: Nullextend}.  We recall that if $R$ is a noetherian algebra that satisfies the Nullstellensatz then for each $P\in {\rm Spec}(R)$ we have the implications
$$(C)\implies (A) \implies (B),$$ where $(A)$, $(B)$, and $(C)$ are as in Definition \ref{defn: DM} (see Brown and Goodearl \cite[II.7.15]{BG}).  This fact and Proposition \ref{prop: Nullextend} show that it is sufficient to consider the implication $(B)\implies (C)$ in obtaining the Dixmier-Moeglin equivalence for cocommutative Hopf algebras of finite Gelfand-Kirillov dimension.  We consider this implication in the next section.

\section{$(B)\implies (C)$}
In this section we show that $(B)$ implies $(C)$ for cocommutative Hopf algebras of finite Gelfand-Kirillov dimension, where $(B)$ and $(C)$ are as in Definition \ref{defn: DM}.  We first recall a useful result of Letzter's, which helps one deal with algebras that are finite free modules over algebras satisfying the Dixmier-Moeglin equivalence.

\begin{thm} (Letzter \cite{Let}) \label{thm: Letzter} Let $R\subseteq S$ be noetherian algebras and suppose that $S$ is a finite free $R$-module on both sides.  Then the following hold: \begin{enumerate}
\item[(i)] $(A)\implies (B)$ for $P\in {\rm Spec}(R)~ \iff ~(A)\implies (B)$ for $P\in {\rm Spec}(S)$;
\item[(ii)] $(B)\implies (A)$ for $P\in {\rm Spec}(R)~\iff ~(B)\implies (A)$ for $P\in {\rm Spec}(S)$;
\item[(iii)] if $S$ has finite GK dimension then:\\
 $(A)\implies (C)$ for $P\in {\rm Spec}(R)~\iff ~(A)\implies (C)$ for $P\in {\rm Spec}(S)$,
\end{enumerate}
where (A), (B), and (C) are as in Definition \ref{defn: DM}.
\end{thm}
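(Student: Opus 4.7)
The plan is to establish a prime-by-prime correspondence: for $P \in {\rm Spec}(S)$ with contraction $p := P \cap R$, I will show that each of the three properties $(A)$, $(B)$, $(C)$ transfers between $P$ and $p$. Once these three correspondences are in hand, each of the equivalences $(i)$, $(ii)$, $(iii)$ follows by chaining them. For example, to deduce $(A) \implies (B)$ for primes of $S$ from the same implication for primes of $R$, one takes $P$ primitive in $S$, uses the $(A)$-correspondence to see $p$ is primitive in $R$, invokes the hypothesis to conclude $p$ is rational, and uses the $(B)$-correspondence to conclude $P$ is rational; the reverse direction is symmetric.

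The set-up rests on the standard theory of finite normalizing extensions. Because $S$ is finite free over $R$ on both sides, the extension $R \subseteq S$ satisfies Lying Over, Going Up, and Incomparability; each prime $p \in {\rm Spec}(R)$ has only finitely many primes of $S$ lying over it, all minimal over $pS$; and $S/P$ is a finite normalizing extension of $R/p$ in both module structures.

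For the primitivity correspondence I would argue via restriction and induction of simple modules. If $M$ is a simple $S$-module with annihilator $P$, the Formanek--Jategaonkar theorem cited in the excerpt decomposes $M|_R$ as a finite direct sum of simple $R$-modules, one of whose annihilators is $p$. Conversely, starting from a simple $R$-module $N$ with annihilator $p$, the induced module $N \otimes_R S$ has a simple subquotient whose annihilator lies over $p$ and by Incomparability coincides with $P$ for an appropriate choice. For the rationality correspondence, finite freeness gives that the Goldie quotient ring $Q(S/P)$ is a finite ring extension of $Q(R/p)$, so their centers are related by a finite field extension; in particular $\mathcal{C}(S/P)$ is algebraic over $k$ iff $\mathcal{C}(R/p)$ is. For the local-closedness correspondence, $P$ is locally closed in ${\rm Spec}(S)$ iff $\bigcap_{Q \supsetneq P} Q \supsetneq P$; using the finite fiber correspondence and Going Up one translates this to the analogous condition at $p$ in $R$, since every prime of $S$ strictly above $P$ either lies over a prime strictly above $p$, or is one of the finitely many remaining primes over $p$, whose joint intersection can be handled separately using Incomparability.

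I expect the main obstacle to be the local-closedness correspondence required for $(iii)$. Unlike primitivity and rationality, local closedness is not a property of a single prime in isolation but a statement about its position inside the spectrum, and the finite-fiber relation between ${\rm Spec}(S)$ and ${\rm Spec}(R)$ does not automatically transport such a statement. This is where the hypothesis of finite Gelfand--Kirillov dimension on $S$ is needed: it controls the chains of primes above $P$ and ensures, via a careful analysis of heart-of-a-prime arguments in the finite extension, that $\bigcap_{Q \supsetneq P} Q / P$ is nonzero in $S/P$ exactly when the analogous intersection is nonzero in $R/p$.
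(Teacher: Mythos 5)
The paper does not prove this theorem at all: it is quoted verbatim from Letzter \cite{Let} and used as a black box, so there is no in-paper argument to compare yours against. Judged on its own terms, your sketch has a foundational problem and a deferred key step. The foundational problem is the very first move, setting $p := P \cap R$ and treating it as a prime of $R$. For a finite free (non-centralizing) extension the contraction of a prime is in general only semiprime: already for $R = k[x]$ sitting inside the crossed product $S = k[x] \ast (\ZZ/2\ZZ)$ with the action $x \mapsto -x$, there are primes of $S$ contracting to $(x^2-1) = (x-1)\cap(x+1)$, which is not prime. So there is no ``prime-by-prime correspondence''; one must work with the finitely many primes of $R$ minimal over $P \cap R$, and the real content of Letzter's theorem is precisely that each of the properties $(A)$, $(B)$, $(C)$ holds for one of these minimal primes if and only if it holds for all of them and if and only if it holds for $P$. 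That equivalence across the whole fiber is not a formality: your induction/restriction argument for primitivity only shows the summands of $M|_R$ have primitive annihilators among the minimal primes, not that a prescribed minimal prime is hit, and your rationality step (``$Q(S/P)$ is a finite ring extension of $Q(R/p)$'') presupposes that regular elements of $R/(P\cap R)$ remain regular in $S/P$ and that the Goldie quotient rings and their centres are compatibly related --- true, but these are theorems, not consequences of freeness alone.

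The second issue is part (iii). You correctly identify local closedness as the hard case, but then the proposal simply asserts that finite GK dimension ``ensures, via a careful analysis of heart-of-a-prime arguments,'' that the hearts $\bigl(\bigcap_{Q \supsetneq P} Q\bigr)/P$ vanish or not simultaneously upstairs and downstairs. That assertion is the entire mathematical content of (iii); no mechanism is given for how GK dimension enters (in Letzter's treatment it is used to compare GK dimensions of prime factors across the finite extension and thereby control which primes can lie immediately above $P$). As written, the proposal is a plan for a proof rather than a proof: the correspondence it relies on is exactly the statement to be established, and the one step you flag as the obstacle is left unresolved. Since the paper itself delegates all of this to \cite{Let}, the honest options are either to cite Letzter as the authors do, or to carry out the fiber-wise correspondence in full, starting from the correct notion of lying over for finite normalizing extensions.
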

Our next result deals with algebraic groups acting rationally on an algebra.  We recall that an affine algebraic group $G$ acts \emph{rationally} on a $k$-algebra $A$ if $A$ can be written as a directed union of finite-dimensional $G$-invariant subspaces with the property that for each such invariant subspace $V$, the natural map from $G$ to ${\rm GL}(V)$ is a morphism of algebraic varieties.  We refer the reader to the book of Brown and Goodearl \cite[II.2.6]{BG} for further details.  The following lemma is a consequence of standard results.

\begin{lem} Let $G$ be an affine algebraic group that acts rationally on a noetherian algebra $A$, let $H$ be a subgroup of $G$ and let $\overline{H}$ denote the Zariski closure of $H$ in $G$.  If $I$ is a two-sided ideal of $A$.  Then $I$ is $H$-invariant if and only if $I$ is $\overline{H}$-invariant.
\label{lem: bar}
\end{lem}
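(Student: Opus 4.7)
The backward implication is immediate from the inclusion $H\subseteq \overline{H}$. For the forward direction, the plan is to show that the stabilizer $\mathrm{Stab}_G(I):=\{g\in G: g\cdot I=I\}$ is Zariski closed in $G$; since the $H$-invariance hypothesis says $H\subseteq \mathrm{Stab}_G(I)$, closedness of the stabilizer will then force $\overline{H}\subseteq \mathrm{Stab}_G(I)$, which is precisely $\overline{H}$-invariance of $I$.

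To establish closedness, I would first invoke the noetherian hypothesis to write $I=\sum_{i=1}^n A x_i A$ for finitely many generators $x_1,\ldots,x_n$, and then use the rationality of the action to choose a single finite-dimensional $G$-invariant subspace $V\subseteq A$ containing all of the $x_i$, on which the action of $G$ factors through a morphism $G\to \mathrm{GL}(V)$. Because $G$ acts by algebra automorphisms, $g\cdot I=\sum_i A(g\cdot x_i)A$, so the condition $g\cdot I\subseteq I$ is equivalent to the finitely many conditions $g\cdot x_i\in I$; since $V$ is $G$-stable, these read $g\cdot x_i\in V\cap I$. The composition of the orbit morphism $G\to V$, $g\mapsto g\cdot x_i$, with the linear quotient $V\to V/(V\cap I)$ is a morphism of varieties, so $\{g\in G:g\cdot x_i\in V\cap I\}$ is Zariski closed as the preimage of $0$. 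Intersecting over $i$ shows that $S_{+}:=\{g\in G:g\cdot I\subseteq I\}$ is closed.

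Finally, to pass from $S_{+}$ to the stabilizer itself, I would use that inversion $\iota: G\to G$, $g\mapsto g^{-1}$, is a morphism of varieties (indeed a self-inverse isomorphism). Since $g\cdot I=I$ holds if and only if both $g\cdot I\subseteq I$ and $g^{-1}\cdot I\subseteq I$, we have $\mathrm{Stab}_G(I)=S_{+}\cap \iota^{-1}(S_{+})$, an intersection of two Zariski closed sets and hence closed. The main technical point is the closedness of $S_{+}$ in the second paragraph: this is where one must combine the noetherian hypothesis (to reduce $H$-invariance of $I$ to a finite list of orbit conditions) with the rationality hypothesis (to upgrade each orbit map into a morphism of varieties); once these two ingredients are in hand, the rest of the argument is formal.
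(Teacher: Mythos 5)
Your proof is correct and follows essentially the same route as the paper: both arguments use rationality to place the finitely many generators of $I$ inside a finite-dimensional $G$-invariant subspace and then deduce that the relevant stabilizer is Zariski closed, hence contains $\overline{H}$. The only difference is cosmetic---the paper cites a standard result (Fogarty) for the closedness of the stabilizer of $I\cap W$, whereas you verify closedness directly via the orbit morphisms into $V/(V\cap I)$ and handle the ``$\subseteq$ versus $=$'' point explicitly with the inversion map.
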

\begin{proof}  
If $I$ is $\overline{H}$-invariant then $I$ is clearly also $H$-invariant.  Conversely, suppose that $I$ is $H$-invariant and let $a_1,\ldots ,a_d$ be generators for $I$.  Then since $G$ acts rationally on $A$, there is a finite-dimensional $G$-invariant vector subspace $W$ of $A$ that contains $a_1,\ldots ,a_d$.   Since $I$ is $H$-invariant and $W$ is $G$-invariant, we have that $I\cap W$ is $H$-invariant.  A standard result (see \cite[Proposition 2.10]{Fogarty}) gives that the stabilizer of $I\cap W$ in $G$ is Zariski closed.  Since this stabilizer contains $H$, it must therefore also contain $\overline{H}$.   Thus $g\cdot a_i\in I$ for every $g\in \overline{H}$ and for every $i\in \{1,\ldots ,d\}$.  It follows that $I$ is 
$\overline{H}$-invariant.  
%
\end{proof}
The next result shows that a dichotomy appears when we look at prime homomorphic images of a smash product of an artinian algebra with the group algebra of a finitely generated nilpotent group.  Namely, the homomorphic image is either simple, or it has a large centre and every nonzero ideal must intersect the centre non-trivially.  This result has some relation to a ``polycentrality'' result of Roseblade and Smith \cite[Theorem 11.3.1]{Passman}.
\begin{prop} Let $k$ be a field, let $B$ be an artinian $k$-algebra, and let $G$ be a finitely generated nilpotent group.  If $A=(B\# k[G])/P$ is a semiprime homomorphic image of $B\# k[G]$ then every nonzero ideal of $A$ intersects the centre of $A$ non-trivially.
\label{prop: semiprime}
\end{prop}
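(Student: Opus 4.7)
The plan is to prove the proposition by induction on the Hirsch length $h(G)$ of $G$, after first reducing to the case where $P$ is prime. Since $B\#k[G]$ is noetherian (by the polycyclic structure of finitely generated nilpotent $G$ together with Hilbert-basis-style arguments for smash products), the semiprime ideal $P$ is a finite intersection of minimal primes $P_1,\dots,P_t$; a Chinese-remainder style patching argument on the semisimple quotient of $A$ then lets us assemble a central element of $A$ inside a given nonzero ideal from central elements in the individual prime images $A_i=(B\#k[G])/P_i$.

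The base case $h(G)=0$ corresponds to $G$ finite, in which case $B\#k[G]$ is module-finite over $B$ and hence artinian; a semiprime artinian algebra is semisimple, and every nonzero ideal is generated by a central idempotent.

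For the inductive step, $G$ is an infinite finitely generated nilpotent group, so its centre $Z(G)$ contains an element $z$ of infinite order. The image $\bar z$ of $z$ in $A$ is a unit and commutes with every $\bar g$ for $g\in G$; the only obstruction to its centrality is its conjugation action on the image of $B$, which is given by the $k$-algebra automorphism $\sigma$ coming from the Hopf action of $z$. Since $B$ is artinian it has only finitely many primes, which $\sigma$ permutes, so after replacing $z$ by a power we may assume $\sigma$ stabilizes each of them and restricts to a $k$-algebra automorphism of each simple artinian factor $M_{n_i}(D_i)$ of $B/\operatorname{Nil}(B)$. A further power together with a Skolem--Noether modification by an invertible element $u\in B$ makes the induced automorphism on $B/\operatorname{Nil}(B)$ trivial, and a lifting argument using the nilpotence of $\operatorname{Nil}(B)$ promotes the modified element $\bar z^N\bar u^{-1}$ to a genuinely central unit $c\in Z(A)$. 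The quotient of $A$ by the ideal generated by $c-\lambda$, for a generic scalar $\lambda\in k$, is then a semiprime image of an algebra of the same shape but with $G$ replaced by $G/\langle z\rangle$, a nilpotent group of strictly smaller Hirsch length. Given a nonzero ideal $I$, either $I$ already contains a nonzero polynomial in $c$ (giving an element of $Z(A)\cap I$ directly), or else $I$ has nonzero image in some such quotient, to which the induction hypothesis applies and supplies a central element that lifts back to an element of $Z(A)\cap I$.

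The main obstacle I anticipate is the Skolem--Noether modification together with the accompanying lifting step that upgrades ``central modulo $\operatorname{Nil}(B)$'' to ``central in $A$'', because the unit $u$ needed to kill the inner part of $\sigma$ must be chosen compatibly across the simple factors and then perturbed within the nilradical to commute with $B$ exactly. A secondary subtlety is the semiprime-to-prime reduction: a central element in a single prime quotient $A_i$ does not automatically lift to a central element of $A$ without first multiplying by the appropriate central idempotent of the semisimple quotient $A/\operatorname{rad}(A)$.
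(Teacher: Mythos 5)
Your strategy has two gaps that I do not think can be repaired within its own framework. First, the Skolem--Noether step fails: the simple artinian factors $M_{n_i}(D_i)$ of $B/\operatorname{Nil}(B)$ are only $k$-algebras, not central simple algebras over $k$, since $B$ is merely artinian (not finite-dimensional) and $k$ is an arbitrary field. The automorphism $\sigma$ induced by $z$ may act with infinite order on the centre $Z(D_i)$, and then no power of it is inner and no modification by a unit $u\in B$ can trivialize it. A concrete instance: $B=k(t)$ with $\operatorname{char}k=0$ and $z$ acting by $t\mapsto t+1$. Here $B$ is commutative, so every unit is central and $\bar z^N\bar u^{-1}$ still conjugates $t$ to $t+N\neq t$; your inductive step never gets off the ground. (The proposition is still true in this example because $k(t)[z^{\pm1};\sigma]$ is simple, but your argument does not see this.) Second, even when a central unit $c$ exists, the final lifting step is unjustified: a central element of $A/(c-\lambda)A$ lying in the image of $I$ lifts only to an element of $I$ that is central \emph{modulo} $(c-\lambda)A$, and there is no mechanism in your argument for correcting it to an element of $Z(A)\cap I$. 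The same defect afflicts your semiprime-to-prime reduction: $A$ embeds in $\prod_i A_i$ but the central idempotents of that product need not lie in $A$ (compare $k[x,y]/(xy)$), so central elements of a single $A_i$ cannot be ``assembled'' into central elements of $A$. Minor points: ``generic $\lambda$'' needs $k$ infinite, and you would also need $(c-\lambda)A$ to be semiprime to invoke the inductive hypothesis.

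The paper's proof avoids both problems by not touching $B$ at all and by building centrality into the induction. It inducts along the upper central series $\{1\}=G_0\subseteq\cdots\subseteq G_m=G$, proving the stronger statement that every nonzero $G$-stable ideal $J$ of $A_i$ (the image of $B\#k[G_i]$) meets $Z(A)$. The base case uses only that $A_0$ is semisimple artinian and that a $G$-stable sum of Wedderburn components has a central identity element. The inductive step writes elements of a $G$-stable ideal $I\subseteq A_d$ in terms of a transversal of $G_{d-1}$ in $G_d$, takes an element $y$ of minimal ``length'' whose leading coefficient is a central element supplied by induction, and then uses the fact that the commutators $[g,g_\lambda^{-1}]$ fall into $G_{d-1}$ (this is where the upper central series is essential) to show that $y-y^g$ and $yr-ry$ have strictly smaller length, hence vanish by minimality. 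Centrality of $y$ is thus forced rather than lifted, which is exactly the point your quotient-and-lift scheme misses.
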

\begin{proof}
We let $$G=G_m \supseteq G_{m-1} \supseteq  \cdots \supseteq G_0 =\{1\}$$ be the upper central series of $G$ and let $A_i$ denote the image of $B\# k[G_i]$ in $A$ for $i=0,\ldots ,m$.  By Proposition \ref{prop: Nullextend}, we see that each $A_i$ is noetherian.  We claim that each $A_i$ is in fact semiprime.  To see this, we note that since $P$ is a semiprime ideal of $B\#k[G]$, $P$ is an intersection of prime ideals of $B\#k[G]$ and hence $P\cap (B\# k[G_i])$ is an intersection of $G$-prime ideals of $B\# k[G_i]$.  Since $G$-prime ideals of noetherian algebras are semiprime, we have that each $A_i$ is a semiprime noetherian ring.

We next claim that if $J$ is a nonzero $G$-stable ideal of $A_i$ for some $i\in \{0,1,\ldots ,m\}$ then $J\cap Z(A)$ is nonzero.  Taking $i=m$ then gives the desired result.  We argue by induction on $i$.  If $i=0$ then $J$ is a nonzero ideal of $A_0$.  Since $A_0$ is semprime and is a homomorphic image of $B$, we see that $A_0$ is a semisimple artinian ring.   Since $J$ is $G$-stable, we see that $J$ is a direct sum of a $G$-stable collection of Wedderburn components of $B$.  Taking the sum of the identity matrices in these components then gives a nonzero element in $J\cap Z(A)$.   We now suppose that the claim holds whenever $i<d$ and consider the case when $i=d$.

We let $I$ be a $G$-stable ideal of $A_d$.  Fix a transversal $\{g_{\lambda}\}_{\lambda \in \Lambda}$ for $G_{d-1}$ in $G_d$ such that $g_{\lambda_0}=1$ for some $\lambda_0\in \Lambda$.  Given $x\in A_d$, we may write 
\begin{equation}
\label{eq: x}
x=\sum_{\lambda \in \Lambda} b_{\lambda}(x) g_{\lambda}
\end{equation} with each $b_{\lambda}(x)\in A_{d-1}$ and all but finitely many $b_{\lambda}(x)$ equal to zero.   This expression is not in general unique, since we are considering elements modulo $P$.  For each $x\in A_d$, we define the \emph{length} of $x$ to be the quantity
\begin{equation}
\label{eq: L}
L(x):=\inf\#\{\lambda \in \Lambda \colon b_{\lambda}(x)\neq 0\},
\end{equation} where the infimum is taken over all expressions for $x$ of the form given in Equation (\ref{eq: x}).
 We note that $L(x)>0$ unless $x=0$.  
 
Let $D=\inf\{L(x)\colon x\in I\setminus \{0\}\}$ and let $J$ denote the collection of all elements of the form $b_{\lambda_0}(x)$ where $x$ ranges over all elements of $I$ that have length at most $D$ and we only take the $b_{\lambda_0}(x)$ appearing in an expression where the infimum in Equation (\ref{eq: L}) is realized.  Then $J$ is a nonzero ideal of $A_{d-1}$.   (By right multiplying an element $x\in I$ by an appropriate element of $G$, we can always obtain an element $y\in I$ of the same length with $b_{\lambda_0}(y)\neq 0$.)  Moreover, we have that $J^g=J$.  To see this, observe that if $b\in J$ is nonzero, then there is some nonzero $x\in I$ that has an expression 
$$x=b+\sum_{j=2}^D b_j g_{\mu_j}$$ for some $\mu_2,\ldots ,\mu_D\in \Lambda\setminus \{\lambda_0\}$ and $b_2,\ldots ,b_D\in A_{d-1}$.   We note that for $g\in G$ and $\lambda\in\Lambda$, we have $$g_{\lambda}^g =[g,g_{\lambda}^{-1}]g_{\lambda}\in A_{d-1}g_{\lambda}.$$ Then 
$x-x^g\in I$ and 
$$x-x^g = (b-b^g) +\sum_{j=2}^D (b_j - b_j^g [g,g_{\mu_j}^{-1}])g_{\mu_j},$$ and so $L(x-x^g)\le D$ and thus $b-b^g\in J$.  It follows that $J=J^g$.
  
By the inductive hypothesis, there is some nonzero $z\in J$ that is central in $A$.  Moreover, by definition of $J$ we have
that there is some nonzero $y\in I$ such that 
$$y=z+\sum_{i=2}^D c_i g_{\nu_i}$$ for some
$\nu_2,\ldots ,\nu_D\in \Lambda\setminus \{\lambda_0\}$ and $c_2,\ldots ,c_D\in A_{d-1}$.
Then for $g\in G$ we have $y-y^g = \sum_{i=2}^D (c_i -c_i^g    [g,g_{\nu_i}^{-1}])g_{\nu_i}$, and so 
$L(y-y^g)<D$ and $y-y^g\in I$.  By minimality of $D$, we see that $y^g=y$ for every $g$ in $G$.  Similarly, if $r\in A_0$ then 
$L(yr-ry)<D$ and so we see that $ry-ry=0$ for all $r\in A_0$.  Since $A$ is generated by $A_0$ and the image of $G$ in $A$, we see that $y$ is central.  The result now follows by induction. 
\end{proof}
\begin{cor} Let $k$ be an algebraically closed field and let $R$ be a prime noetherian $k$-algebra that satisfies the Nullstellensatz and the Dixmier-Moeglin equivalence and let $G$ be a finitely generated nilpotent-by-finite group that acts on $R$.  Suppose that there is a finitely generated $k$-vector space $V$ of $R$ such that $V$ generates $R$ as a $k$-algebra and $V^g=V$ for all $g\in G$. Then if $R$ has finite Gelfand-Kirillov dimension then $R\# k[G]$ satisfies the Dixmier-Moeglin equivalence.
\label{cor: dix}
\end{cor}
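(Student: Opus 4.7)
The plan is to establish $(B) \Longrightarrow (C)$ for $A := R\#k[G]$, since Proposition \ref{prop: Nullextend} already supplies the Nullstellensatz, from which $(C) \Longrightarrow (A) \Longrightarrow (B)$ follows. First I would reduce to the case where $G$ is nilpotent. Let $G_0$ be a normal nilpotent subgroup of $G$ of finite index, and set $A_0 := R\#k[G_0]$. A transversal of $G_0$ in $G$ gives a basis of $A$ as a free (left and right) $A_0$-module, and both $A_0$ and $A$ have finite Gelfand-Kirillov dimension. Theorem \ref{thm: Letzter} then transfers each of the implications $(A)\Longrightarrow(B)$, $(B)\Longrightarrow(A)$, and $(A)\Longrightarrow(C)$ between $A_0$ and $A$; combined with $(C)\Longrightarrow(A)$ from the Nullstellensatz, this shows that it is enough to verify the DME for $A_0$, so I may assume $G$ itself is finitely generated and nilpotent.

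Next I would fix a rational prime $P$ of $A$ and reformulate the goal. Since $k$ is algebraically closed and $P$ is rational, the centre $Z(A/P)$ injects into the extended centroid $\mathcal{C}(A/P)=k$ and hence equals $k$. So if every nonzero two-sided ideal of $A/P$ meets $Z(A/P)$, every such ideal contains a nonzero scalar, forcing $A/P$ to be simple and $P$ to be maximal (hence locally closed). The task therefore reduces to showing that every nonzero ideal of $A/P$ meets its centre. Set $J := P \cap R$, a $G$-invariant semiprime ideal of $R$; the hypothesis $V^g=V$ produces a rational action of the algebraic group $\overline G := \overline{\mathrm{Im}(G \to \mathrm{GL}(V))}$ on $R$, under which $J$ is also invariant by Lemma \ref{lem: bar}.

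To prove the centre-meeting property for $A/P$, I would reduce to Proposition \ref{prop: semiprime} by Goldie localisation. The minimal primes $\mathfrak{q}_1,\ldots,\mathfrak{q}_n$ of $R$ over $J$ are finite in number and permuted by $G$, so the classical Goldie quotient ring $B$ of $R/J$ is the semisimple artinian ring $\prod_i Q_{cl}(R/\mathfrak{q}_i)$, on which $G$ acts. One would then verify that the set $E$ of regular elements of $R/J$ is Ore in $A/P$ and that $(A/P)[E^{-1}]$ is a prime quotient of the form $(B\#k[G])/\widetilde P$. Proposition \ref{prop: semiprime}, applied with $B$ now artinian, gives that every nonzero ideal of $(B\#k[G])/\widetilde P$ meets its centre, and this passes back to $A/P$ since Ore localisation extends nonzero ideals to nonzero ideals and contracts central elements to central elements.

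The main obstacle is precisely this Goldie reduction: one must check that the regular elements of $R/J$ are Ore-regular in $A/P$ and that the localisation is a prime quotient of $B\#k[G]$. It is at this point that all the hypotheses --- the DME for $R$, its finite Gelfand-Kirillov dimension, and the rational $\overline G$-action from the $G$-stable generating subspace $V$ --- must be exploited, via an analysis of the $G$-orbits on $\mathrm{Spec}(R)$ and the way primes of $A$ lie over $G$-primes of $R$.
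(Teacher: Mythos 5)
Your reduction to the case of nilpotent $G$ via Theorem \ref{thm: Letzter} and Proposition \ref{prop: Nullextend} matches the paper, but the core of your argument has a genuine gap: you aim to prove that every nonzero ideal of $A/P$ meets its centre, hence that $A/P$ is simple and $P$ is maximal. This is strictly stronger than local closedness and is false in the intended generality: already for $R=U(\mathcal{L})$ with $\mathcal{L}$ the two-dimensional nonabelian Lie algebra, the zero ideal is rational and primitive but not maximal. Proposition \ref{prop: semiprime} requires the coefficient ring to be \emph{artinian}, and $S:=R/(P\cap R)$ is not; it only becomes artinian after inverting its regular elements. That localisation annihilates precisely the ideals of $A/P$ that meet $S$ non-trivially, so your ``passing back'' step controls only the ideals (equivalently, the height one primes) of $A/P$ that intersect $S$ trivially. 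For those the argument does work and shows, as in the paper, that there are none: the localisation is a prime image of $Q(S)\#k[G]$ with trivial centre, hence simple by Proposition \ref{prop: semiprime}. But it says nothing about the primes that do meet $S$, and these are exactly the ones that can prevent $P$ from being maximal.

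What is missing is the treatment of the height one primes over $P$ that meet $S$. The paper handles these with Lorenz's results: $Q=P\cap R$ is a $\overline{G}$-rational semiprime ideal (Lemma \ref{lem: bar}), it equals $\gamma(J)=\bigcap_{g\in\overline{G}}J^g$ for some rational prime $J$ of $R$ by \cite[Theorem 1]{Lor2}, the Dixmier-Moeglin equivalence for $R$ makes $J$ locally closed, and \cite[Theorem 1]{Lor3} then makes $Q$ locally closed in the $\overline{G}$-spectrum of $R$; consequently the intersection of the nonzero $\overline{G}$-primes of $S$, and hence of the height one primes of $A/P$ meeting $S$, is nonzero. Combining the two classes shows that the intersection of all primes properly containing $P$ is strictly larger than $P$, i.e.\ that $P$ is locally closed---not that $P$ is maximal. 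This is the step where the Dixmier-Moeglin hypothesis on $R$ is actually consumed, and it is absent from your sketch except as an unspecified ``analysis of $G$-orbits on ${\rm Spec}(R)$.''
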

\begin{proof}  Since $R$ satisfies the Nullstellensatz, so does $R\# k[G]$ by Proposition \ref{prop: Nullextend}.  Let $N$ be a nilpotent subgroup of $G$ of finite index.  Note that $R\# k[G]$ is a finite free module over $R\# k[N]$.  We also note that $R\# k[G]$ has finite Gelfand-Kirillov dimension.  To see this let $W\subseteq k[G]$ be a finite-dimensional $k$-vector space that contains $1$, is spanned by elements of $G$, and that generates $k[G]$ as a group.  Then $U:=V+W$ generates $R\#k[G]$ as a $k$-algebra.  By assumption, $WV=VW$ and so $U^n \subseteq  (k+V)^n W^n$ and so ${\rm dim}(U^n)\le {\rm dim}((k+V)^n) \cdot {\rm dim}(W^n)$.  Since $R$ has finite Gelfand-Kirillov dimension and since $G$ is nilpotent-by-finite, we see that both ${\rm dim}((k+V)^n)$ and ${\rm dim}(W^n)$ are polynomially bounded functions of $n$ and so $R\#k[G]$ has finite Gelfand-Kirillov dimension.  Thus by Theorem \ref{thm: Letzter}, we see that $R\# k[G]$ satisfies the Dixmier-Moeglin equivalence if and only if $R\# k[N]$ satisfies the Dixmier-Moeglin equivalence.  Thus we may assume without loss of generality that $G$ is nilpotent.  

Since $R\#k[G]$ satisfies the Nullstellensatz, it is sufficient to show that if $P$ is a prime ideal of $R\# k[G]$ and the extended centroid of $(R\# k[G])/P$ is equal to $k$ then the set of height one prime ideals in $R\#k[G]/P$ is finite.
Let $P$ be a prime ideal of $R\# k[G]$ and suppose that the extended centroid of $(R\# k[G])/P$ is equal to $k$.  Let $Q=P\cap R$ and let $\overline{G}$ denote the Zariski closure of the image of $G$ in ${\rm GL}(V)$.  Then $\overline{G}$ is an affine algebraic group that acts rationally on $R$ (see the book of Brown and Goodearl \cite[II.2.6]{BG} for relevant definitions) and by Lemma \ref{lem: bar} $Q$ is a $\overline{G}$-prime of $R$ and consequently $Q$ is a semiprime ideal of $R$.  Since the extended centroid of $(R\# k[G])/P$ is equal to $k$, we see that the $G$-invariants of the extended centroid of $R/Q$ is also equal to $k$ and in particular the $\overline{G}$-invariants of the extended centroid of $R/Q$ is also $k$.  

In particular, $Q$ is a $\overline{G}$-rational prime ideal of $R$.  By a result of Lorenz \cite[Theorem 1]{Lor2} there is a surjection from the set of rational prime ideals of $R$ onto the set of $\overline{G}$-rational ideals of $R$ given by $J\mapsto \gamma(J):=\bigcap_{g\in \overline{G}} J^g$ and hence there is a rational prime $J$ of $R$ such that $\gamma(J)=Q$.  Since $R$ satisfies the Dixmier-Moeglin equivalence, we see that $J$ is locally closed in ${\rm Spec}(R)$ and thus by another result of Lorenz \cite[Theorem 1]{Lor3} we have $Q=\gamma(J)$ is locally closed in the $\overline{G}$-spectrum of $R$.  Let $S=R/Q$.  Then $T:=R\#k[G]/P$ may be regarded as a prime homomorphic image of $S\#k[G]$ with the property that the composition of the natural map from $S$ to $S\#k[G]$ and the natural surjection from $S\#k[G]$ to $T$ is injective.  Henceforth, we identify $S$ with its image in $T$.
It follows if $\mathcal{U}$ is the set of height one prime ideals of $T$ that intersect $S$ non-trivially then the intersection of $\mathcal{U}$ is nonzero as it contains the intersection of the nonzero $\overline{G}$-primes of $S$, which is nonzero since $Q$ is locally closed in the $\overline{G}$-spectrum of $R$.

 

Let $\mathcal{T}$ denote the set of height one primes of $T$ that intersect $S$ trivially.  Then the set of prime ideals in $\mathcal{T}$ survives in the localization of $T$ obtained by inverting the regular elements of $S$, which is a prime homomorphic image of $Q(S)\# k[G]$.  Since $Q(S)$ is semiprime artinian, By Proposition \ref{prop: semiprime}, this localization either has a non-trivial centre or it is simple.  Since $T$ has extended centroid equal to $k$, we see that $\mathcal{T}$ must be empty.  Thus if $T$ has an extended centroid that is algebraic over $k$ then $\mathcal{S}\cup \mathcal{T}$ must be finite and so $P$ is locally closed in ${\rm Spec}(R\# k[G])$.  
\end{proof} 

We may now deduce our main result from Corollary \ref{cor: dix}.
\begin{proof}[Proof of Theorem \ref{thm: main}]
By Proposition \ref{prop: structure}, we have that $H=U(\mathcal{L})\# k[G]$, where $\mathcal{L}$ is a finite-dimensional Lie algebra and $G$ is a nilpotent-by-finite group with $\mathcal{L}^g = \mathcal{L}\subseteq U(\mathcal{L})$ for all $g\in G$.  Since $U(\mathcal{L})$ satisfies the Nullstellensatz \cite[Corollary 9.4.22]{MR}, by Corollary \ref{cor: dix}, $H$ satisfies the Dixmier-Moeglin equivalence.  
%
\end{proof}
\section*{Acknowledgments} We thank Martin Lorenz for many helpful comments and suggestions.  In particular, he pointed out a strengthening of our original Proposition \ref{prop: semiprime} along with many helpful references that considerably shortened and improved some of the proofs.

 \end{document}